\font\smallit=cmti10
\renewcommand\section{\@startsection {section}{1}{\z@}
{-30pt \@plus -1ex \@minus -.2ex}
{2.3ex \@plus.2ex}
{\normalfont\normalsize\bfseries}}
\renewcommand\subsection{\@startsection{subsection}{2}{\z@}
{-3.25ex\@plus -1ex \@minus -.2ex}
{1.5ex \@plus .2ex}
{\normalfont\normalsize\bfseries}}
\renewcommand{\@seccntformat}[1]{\csname the#1\endcsname. }
\newtheorem{theorem}{Theorem}[section]
\newtheorem*{theorem*}{Theorem}
\newtheorem{proposition}[theorem]{Proposition}
\newtheorem{lemma}[theorem]{Lemma}
\newtheorem{corollary}[theorem]{Corollary}
\newtheorem{definition}[theorem]{Definition}
\theoremstyle{definition}
\newtheorem{remark}[theorem]{Remark}
\newtheorem{example}[theorem]{Example}
\begin{document}

\newcommand\+[1]{\mathcal{#1}}
\newcommand{\N}{{\mathbb N}}     

\newcommand{\Z}{{\mathbb Z}}     
\newcommand{\Q}{{\mathbb Q}}     
\newcommand{\RR}{{\mathbb R}}    
\newcommand{\C}{{\mathbb C}}     
\newcommand{\dom}{\textit{Domain}}     
\newcommand{\suc}{\textit{Suc}}     
\newcommand{\expp}{\textit{exp3}}     
\newcommand{\card}{\textit{card}}     
\newcommand{\lcm}{\textit{lcm}}     
\newcommand{\round}{\textit{round}}     
\newcommand{\change}[1]{\marginpar{\color{red} {\bf{\Large  Z \ }{#1}}}}

\begin{center}
{\bf\uppercase{Characterizing congruence preserving functions}
$\Z/n\Z\to\Z/m\Z$ \uppercase{via rational polynomials}}
\vskip 20pt
{\bf Patrick C\'EGIELSKI%
\newcounter{thanks}
\setcounter{thanks}{\value{footnote}}
\footnote{Partially supported by TARMAC ANR agreement  12 BS02 007 01.}}\\
{\smallit LACL, EA 4219, Universit\'e Paris-Est Cr\'eteil, France}\\
{\tt cegielski@u-pec.fr}\\ 
\vskip 10pt
{\bf Serge GRIGORIEFF
\setcounter{thanks}{\value{footnote}}%
\footnotemark[\value{thanks}]}\\
{\smallit LIAFA, CNRS and Universit\'e Paris-Diderot, France}\\
{\tt seg@liafa.univ-paris-diderot.fr}\\ 
\vskip 10pt
{\bf Ir\`ene  GUESSARIAN
\setcounter{thanks}{\value{footnote}}%
\footnotemark[\value{thanks}]
\footnote{Emeritus at UPMC Universit\'e Paris 6. Corresponding author}}\\
{\smallit LIAFA, CNRS and Universit\'e Paris-Diderot, France}\\
{\tt ig@liafa.univ-paris-diderot.fr}\\ 
\end{center}
\vskip 30pt

\vskip 30pt

\centerline{\bf Abstract}

\noindent
We introduce a basis of rational polynomial-like functions
$P_0,\ldots,P_{n-1}$
for the free module of functions $\Z/n\Z\to\Z/m\Z$.
We then characterize the subfamily
of  congruence preserving functions
as the set of linear combinations of the functions $\lcm(k)\,P_k$
where $\lcm(k)$ is the least common multiple of $2,\ldots,k$
(viewed in $\Z/m\Z$).
As a consequence, when $n\geq m$, the number of such functions is independent of $n$.


\section{Introduction}
%
%

%
%
The notion of congruence preserving function $\Z/n\Z\to\Z/m\Z$
was introduced in Chen~\cite{ch1995} and studied in 
Bhargava~\cite{Bh1997}.
\begin{definition}\label{def:Bhargava}
Let $m,n\geq1$.
A function $f:\Z/n\Z\to\Z/m\Z$ is said to be {\em congruence 
preserving } if  for all $d$ dividing $m$
\begin{multline}\label{eq:Bha}
 \forall a, b\in\{0,\ldots,n-1\}\qquad
a\equiv b \pmod d\Longrightarrow
f(a) \equiv f(b)\pmod d\quad
\end{multline}
\end{definition}
\begin{remark}\label{rk:Bhargava}
1. If $n\in\{1,2\}$ or $m=1$ then every function
$\Z/n\Z\to\Z/m\Z$ is trivially congruence preserving.
\\
2. Observe that  since $d$ is assumed to divide $m$,
equivalence modulo $d$ is a congruence on $(\Z/m\Z,+,\times)$.
However, since $d$ is not supposed to divide $n$,
equivalence modulo $d$ may not be a congruence on $(\Z/n\Z,+,\times)$.
\end{remark}
\begin{example}\label{ex:Bhargava}
1. For functions $\Z/6\Z\to\Z/3\Z$, condition~\eqref{eq:Bha}
reduces to the conditions
$f(3)\equiv f(0)\pmod 3$, $f(4)\equiv f(1)\pmod 3$, $f(5)\equiv f(2)\pmod 3$.
\\
2. For functions $\Z/6\Z\to\Z/8\Z$, condition~\eqref{eq:Bha}
reduces to 
$f(2)\equiv f(0)\pmod 2$,
$f(3)\equiv f(1)\pmod 2$,
$f(4)\equiv f(0)\pmod 4$,
$f(5)\equiv f(1)\pmod 4$.
\end{example}

A formal polynomial $F(X)\in(\Z/m\Z)[X]$
has no canonical interpretation as a function $\Z/n\Z\to\Z/m\Z$
when $m$ does not divide $n$~:
indeed $a\equiv b\pmod n$ does not imply  $F(a)\equiv F(b) \pmod m$.

According to Chen \cite{ch1995,ch1996}  a function $f:\Z/n\Z\to\Z/m\Z$ is said 
to be {\em polynomial} if there exists some polynomial $F\in\Z[X]$
such that,
for all $a\in\{0,\ldots,n-1\}$, $f(a)\equiv F(a)\pmod m$.
Chen also shows that there can be congruence preserving functions which are not polynomial.
Using counting arguments,
Bhargava \cite{Bh1997} characterizes the ordered pairs $(n,m)$
such that every congruence preserving function $f:\Z/n\Z\to\Z/m\Z$
is polynomial.

\medskip

In Section \ref{s:poly rat} we introduce a notion of
rational polynomial function $f:\Z/n\Z\to\Z/m\Z$
based on polynomials with rational coefficients
which map integers to integers.
We observe that the free $\Z/m\Z$-module of functions
$f\colon\Z/n\Z\to\Z/m\Z$ admits a 
basis of such rational polynomials $P_0,\ldots,P_{n-1}$
where $P_k$ has degree $k$.
Indeed, every function $\Z/n\Z\to\Z/m\Z$ is rational polynomial
of degree at most $n-1$.

In Section \ref{s:main} we prove the main theorem of this paper:
 congruence preserving functions
$f\colon\Z/n\Z\to\Z/m\Z$
are the $\Z/m\Z$-linear combinations of the functions $\lcm(k)P_k$
where $\lcm(k)$ is the least common multiple of $2,\ldots,k$
(viewed in $\Z/m\Z$).
The proof adapts the techniques of our paper \cite{cgg14},
exploiting similarities between Definition~\ref{def:Bhargava}
and the condition studied in \cite{cgg14}
for functions $f:\N\to\Z$
(namely, $x \! - \! y$ divides $f(x)\! -\! f(y)$ for all $x,y\in\N$).

In Section \ref{s:count} we get a by-product of our characterization:
every congruence preserving function $f\colon\Z/n\Z\to\Z/m\Z$
is rational polynomial for a polynomial of degree less than 
the minimum between $n$ and $\mu(m)$
(the largest prime power dividing $m$).
We also use our main theorem to count the congruence preserving functions $\Z/n\Z\to\Z/m\Z$.
We thus get an expression equivalent to that
obtained by Bhargava in \cite{Bh1997}
and which makes apparent the fact that, for $n\geq\mu(m)$
(hence for $n\geq m$),
this number depends only on $m$ and is independent of $n$. 
%

\section{Representing functions $\Z/n\Z\to\Z/m\Z$
by rational polynomials}\label{s:poly rat}
Some polynomials in $\Q[X]$
(i.e. polynomials with rational coefficients)
happen to map  $\N$ into $\N$, i.e. they take integer values  for all arguments in $\N$. 
\begin{definition}\label{def:Pk}
For $k\in\N$, let $P_k\in\Q[X]$ be the following polynomial:
$$
P_k(x)=\dbinom{x}{k}
= \dfrac{\prod_{i=0}^{k-1} (x - i)}{k!}\,.
$$
\end{definition}
\noindent
The $P_k$  are also called binomial polynomials.
We will use in later examples 
\\
$P_0(x)=1$\ \ ,\ \ $P_1(x)=x$\,,\;$P_2(x)=x(x-1)/2$\ \ ,\ \ $P_3(x)={x(x-1)(x-2)}/{6}$\ \ ,\ \ $P_4(x)={x(x-1)(x-2)(x-3)}/{24}$\ \ ,\ \ $P_5(x)={x(x-1)(x-2)(x-3)(x-4)}/{120}$.

\smallskip
\noindent
In 1915, P\'olya \cite{polya1915} used the $P_k$ to give  the following very elegant and elementary characterization of  polynomials which take integer values on the integers.
\begin{theorem}[P\'olya]
A polynomial is integer-valued on $\Z$ iff it can be written as a $\Z$-linear combination of the polynomials 
$P_k$.
\end{theorem}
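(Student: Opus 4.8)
The plan is to prove the two implications separately; the reverse one is a one-line verification and the forward one carries the content. For the ``if'' direction I would observe that each $P_k=\binom{X}{k}$ takes integer values at \emph{every} integer: on $\N$ this is the usual combinatorial reading of binomial coefficients, and at a negative integer $-n$ one has $\binom{-n}{k}=(-1)^k\binom{n+k-1}{k}\in\Z$. Since a $\Z$-linear combination of integer-valued functions is again integer-valued, any such combination of the $P_k$ is integer-valued on $\Z$.

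For the ``only if'' direction, the first step is to record that $P_k$ has degree exactly $k$ with leading coefficient $1/k!$, so $(P_0,\dots,P_d)$ is a $\Q$-basis of the space of polynomials of degree $\le d$; hence a polynomial $F\in\Q[X]$ of degree $d$ has a \emph{unique} expansion $F=\sum_{k=0}^d c_kP_k$ with $c_k\in\Q$, and the whole point is to show $c_k\in\Z$ when $F$ is integer-valued. The tool I would use is the forward difference operator $\Delta$, $(\Delta F)(x)=F(x+1)-F(x)$. Pascal's identity $\binom{x+1}{k}-\binom{x}{k}=\binom{x}{k-1}$ gives $\Delta P_k=P_{k-1}$ for $k\ge1$ and $\Delta P_0=0$, so iterating, $\Delta^{\,j}F=\sum_{k\ge j}c_kP_{k-j}$. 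Evaluating at $x=0$ and using $P_0(0)=1$ and $P_i(0)=0$ for $i\ge1$, one gets $c_j=(\Delta^{\,j}F)(0)$.

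Finally, expanding the iterated difference gives $(\Delta^{\,j}F)(0)=\sum_{i=0}^{j}(-1)^{j-i}\binom{j}{i}F(i)$, a $\Z$-linear combination of the integers $F(0),\dots,F(j)$; hence $c_j\in\Z$, which finishes the proof. (Equivalently one could induct on $\deg F$: if $F$ is integer-valued then so is $\Delta F$, of smaller degree, so by induction $\Delta F=\sum_k c'_kP_k$ with $c'_k\in\Z$; then $F-\sum_k c'_kP_{k+1}$ has zero difference, hence equals the constant $F(0)\in\Z$, and $F=F(0)P_0+\sum_k c'_kP_{k+1}$ has integer coefficients, the base case $\deg F=0$ being trivial.)

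I do not expect a genuine obstacle here: this is essentially P\'olya's original argument. The only points that warrant a little care are the behaviour of $\binom{X}{k}$ at negative integers (needed because the statement concerns all of $\Z$, not just $\N$) and the preliminary remark that the $P_k$, having strictly increasing degrees, form a basis, so that ``the coefficients $c_k$'' is meaningful before one argues that they are integral.
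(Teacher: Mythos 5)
Your proof is correct. The paper itself gives no proof of this statement --- it is quoted as a classical result of P\'olya (1915) --- so there is nothing in the source to compare against; your argument (the forward-difference operator $\Delta$, the identity $\Delta P_k=P_{k-1}$, the formula $c_j=(\Delta^j F)(0)=\sum_{i=0}^{j}(-1)^{j-i}\binom{j}{i}F(i)$, plus the check that $\binom{-n}{k}=(-1)^k\binom{n+k-1}{k}$ handles negative arguments) is the standard and complete proof of exactly the statement as quoted.
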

\smallskip
\noindent
It turns out that the representation of functions $\N\to\Z$ as $\Z$-linear combinations of the $P_k$'s used in \cite{cgg14}
also fits in the case of functions $\Z/n\Z\to \Z/m\Z$~:
every such function is a $(\Z/m\Z)$-linear combination of the $P_k$'s.

\begin{definition}\label{def:rat-pol}
A function $f:\Z/n\Z\to\Z/m\Z$ is {\em rat-polynomial }
if there exists a polynomial with rational coefficients $R\in\Q[X]$
such that
$
\forall a\in\{0,\ldots,n-1\},\ 
f(a)\equiv R(a) \pmod m.
$
The degree of $f$ is the smallest among the degrees of
such polynomials $R$. 

We denote by $P_k^{n,m}$
the rat-polynomial function $\Z/n\Z\to\Z/m\Z$
associated with the polynomial $P_k$ of Definition~\ref{def:Pk}.
When there is no ambiguity, i.e. when $n,m$ are fixed,
$P_k^{n,m}$ will be denoted simply as $P_k$.
\end{definition}
\begin{remark}
In Definition~\ref{def:rat-pol}, the polynomial $R$
 {\em depends }  on the choice
of  representatives of elements of $\Z/n\Z$: e.g.  for $n=m=6$,
$0\equiv 6\pmod{6}$ but $0=P_2(0)\not\equiv P_2(6)=3\pmod{6}$.
The chosen representatives for elements of $\Z/n\Z$ will always be  $\{0,\ldots,n-1\}$.
\end{remark}
We now prove the representation result by the $P_k$'s.
\begin{theorem}\label{uniqueRepresentation}
For any function $f\colon \Z/n\Z \to \Z/m\Z$, there exists a unique sequence $a_0, a_1, \ldots, a_{n-1}$ of elements in $\Z/m\Z$
such that
\begin{equation}\label{eq:main}
f = \sum_{k=0}^{n-1} \; a_k\,P_{k}\quad {\text  with }\quad P_{k}(x) = \displaystyle\frac{\prod_{i=0}^{k-1} (x - i)}{k!}={{x}\choose{k}}
\end{equation} 
\end{theorem}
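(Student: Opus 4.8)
The plan is to exhibit the representation by a standard finite-difference argument and then establish uniqueness by a dimension count. First I would observe that the $P_k$, for $0\le k\le n-1$, are genuine functions $\Z/n\Z\to\Z/m\Z$: each $P_k(x)=\binom{x}{k}$ is integer-valued on $\N$ by P\'olya's theorem (it is trivially a $\Z$-linear combination of the $P_j$, namely itself), so reducing mod $m$ the values $P_k(0),\dots,P_k(n-1)$ gives a well-defined element of the free module $M$ of all functions $\Z/n\Z\to\Z/m\Z$. This module is free of rank $n$ over $\Z/m\Z$, with the obvious basis of "point mass" functions $\delta_0,\dots,\delta_{n-1}$.

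For existence, I would recover the coefficients via the forward difference operator. Set $\Delta g(x)=g(x+1)-g(x)$ and recall the classical identity $\Delta^k P_j = P_{j-k}$ (with $P_{<0}=0$), so that the Newton forward-difference formula gives, for any $f\in M$,
\[
f(x)=\sum_{k=0}^{n-1} (\Delta^k f)(0)\,P_k(x)
\]
as an identity of functions on $\{0,1,\dots,n-1\}$, hence in $M$. This is purely a statement about finite sums of integers (no division occurs: $\Delta^k f(0)=\sum_{i=0}^k(-1)^{k-i}\binom{k}{i}f(i)$), so it makes perfect sense with values in $\Z/m\Z$. Thus $a_k:=(\Delta^k f)(0)$ works, and every $f$ is a $(\Z/m\Z)$-linear combination of $P_0,\dots,P_{n-1}$.

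For uniqueness, I would note that existence shows the $n$ elements $P_0,\dots,P_{n-1}$ span the rank-$n$ free module $M$ over $\Z/m\Z$; since $\Z/m\Z$ is a commutative ring, a generating set of $n$ elements in a free module of rank $n$ is automatically a basis (equivalently: pass to the matrix expressing the $P_k$ in terms of the $\delta_j$; it is lower-triangular with diagonal entries $P_k(k)=1$, hence invertible over $\Z/m\Z$, so the inverse change of basis is also integral mod $m$). Either way the $a_k$ are uniquely determined. The only point requiring a little care — and the one I would flag as the main obstacle — is making sure the finite-difference manipulations are legitimate \emph{over $\Z/m\Z$} rather than only over $\Q$: this is fine precisely because $\Delta^k f(0)$ is an integer combination of the values $f(0),\dots,f(k)$, so no denominators intervene, and the identity $f=\sum_k \Delta^k f(0)\,P_k$ holds coefficientwise as integers before reduction. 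Once that is observed, both existence and uniqueness are immediate.
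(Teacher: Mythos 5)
Your proof is correct and rests on the same mechanism as the paper's: the triangularity $P_k(j)=0$ for $j<k$ together with $P_k(k)=1$, which the paper exploits by solving for the $a_k$ recursively and which you package as invertibility of a unipotent lower-triangular matrix over $\Z/m\Z$. The only added value in your version is the explicit closed form $a_k=(\Delta^k f)(0)=\sum_{i=0}^{k}(-1)^{k-i}\binom{k}{i}f(i)$, together with the correct observation that this involves no denominators and hence reduces mod $m$ without difficulty.
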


\begin{proof}
Let us  begin by unicity.
We have $f(0) = a_0$ hence $a_0 = f(0)$.
We have $f(1) = a_0 + a_1$, hence $a_1 = f(1) - f(0)$.
By induction, and noting that $P_{k}(k)=1$, we have $f(k) = Q(k) + a_k.P_{k}(k)=Q(k) + a_k$,
hence we are able determine $a_k$. 

For existence, argue backwards to see that
 this sequence suits.
\end{proof}
\begin{remark}
The evaluation order of   $a_k\,P_k(x)$ in $\Z/m\Z$ is defined as follows:
for $x$ an element of $\Z/n\Z$, we consider
it as an element of $\{0,\ldots,n-1\}\subseteq\N$ and we evaluate
$P_{k}(x) ={ \displaystyle\frac{1}{k!} }\prod_{i=0}^{k-1} (x - i)$
as an element of $\N$, then we consider the remainder modulo $m$, and finally we multiply 
the result by $a_k$ in $\Z/m\Z$. For instance, for $n=m=8$, $4\,P_{2}(3) = 4\displaystyle.\frac{3\times 2}{2} = 4.3 = 4$,
but we might be tempted to evaluate it as
$4\,P_{2}(3) = \displaystyle\frac{4 \times 3 \times 2}{2} 
= \frac{0}{2} = 0$,
which does {\bf not } correspond to our definition. However,  dividing $a_k$ by a factor of the denominator is allowed.
%
\end{remark}
\begin{corollary} \label{basis}(1) Every function $f:\Z/n\Z\to\Z/m\Z$
is rat-polynomial with degree less than $n$.
\\
(2) The family of rat-polynomial functions $(P_k)_{k=0,\ldots,n-1}$
is a basis of the $(\Z/m\Z)$-module of functions
$\Z/n\Z\to\Z/m\Z$.
\end{corollary}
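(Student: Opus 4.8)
The plan is to read off both parts directly from Theorem~\ref{uniqueRepresentation}, so the whole argument is short. For part~(1), start with an arbitrary $f\colon\Z/n\Z\to\Z/m\Z$ and invoke Theorem~\ref{uniqueRepresentation} to obtain $a_0,\ldots,a_{n-1}\in\Z/m\Z$ with $f=\sum_{k=0}^{n-1}a_kP_k$. Pick integer lifts $\tilde a_k\in\Z$ of the residues $a_k$ and form $R(X)=\sum_{k=0}^{n-1}\tilde a_k\,P_k(X)\in\Q[X]$. Since $P_k$ has degree $k\le n-1$, the polynomial $R$ has degree at most $n-1$; and for every $a\in\{0,\ldots,n-1\}$ the evaluation convention adopted right after Theorem~\ref{uniqueRepresentation} gives $R(a)\equiv\sum_k a_k\,P_k(a)=f(a)\pmod m$. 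Hence $f$ is rat-polynomial of degree $<n$, which is exactly the claim; note that we only need the upper bound $\deg f\le n-1$, not an exact value, and that the choice of lifts $\tilde a_k$ is immaterial since the residues of $R$ on $\{0,\ldots,n-1\}$ do not depend on it.

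For part~(2), first record the standard fact that the set of functions $\Z/n\Z\to\Z/m\Z$, with pointwise operations, is a free $(\Z/m\Z)$-module of rank $n$: a function is determined by its values at the $n$ representatives $0,\ldots,n-1$, so evaluation at these points is a $(\Z/m\Z)$-module isomorphism onto $(\Z/m\Z)^n$. Now the content of Theorem~\ref{uniqueRepresentation} is precisely that each such function has one and only one expression as a $(\Z/m\Z)$-linear combination of $P_0,\ldots,P_{n-1}$. Existence of the expression says the family $(P_k)_{0\le k\le n-1}$ spans the module; uniqueness, specialized to the zero function, says $\sum_k a_k P_k=0$ forces $a_0=\cdots=a_{n-1}=0$, i.e. the family is linearly independent over $\Z/m\Z$. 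A spanning linearly independent family is by definition a basis, so we are done.

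There is essentially no obstacle here: the corollary is pure bookkeeping on top of Theorem~\ref{uniqueRepresentation}. The only points worth stating carefully are the ones flagged above, namely that ``degree'' in Definition~\ref{def:rat-pol} is a minimum over representing polynomials (so part~(1) asserts only an inequality), and that the module of functions $\Z/n\Z\to\Z/m\Z$ is genuinely free of rank $n$ so that the words ``basis'' and ``unique coordinates'' match up.
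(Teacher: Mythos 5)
Your proposal is correct and matches the paper's treatment: the corollary is stated there without a separate proof precisely because it is immediate bookkeeping from Theorem~\ref{uniqueRepresentation}, and your two paragraphs (lifting the coefficients to get a representing polynomial of degree $<n$, and reading spanning plus linear independence off existence plus uniqueness) supply exactly that bookkeeping. Nothing is missing and no different idea is used.
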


\begin{example}\label{exZ6}
The function $f\colon  \Z/6\Z \to \Z/6\Z$ defined by

\begin{tabular}{cccccc}
$0 \mapsto 0$& $1\mapsto 3$&  $ 2\mapsto 4 $&  $3\mapsto 3$& $ 4\mapsto 0$&  $ 5\mapsto 1$\end{tabular}

\noindent is represented by the rational polynomial   $P_f(x)=3x+4\frac{x(x-1)}{2}$  which can be simplified into
$P_f(x)=3x-x(x-1)$ on $\Z/6\Z$.
\end{example}

\begin{example}  The function $f\colon \Z / 6\Z  = \Z / 8\Z$ 
given by Chen \cite{ch1995} as a non polynomial
congruence preserving function, namely the function defined by
$ f(0) = 0\  , \  
f(1 )=  3 \  , \   f(2 )=  4\  , \  
f(3 )=  1\  , \  
f(4 )=  4\  , \  
f(5 )=  7$,
is represented by the rational polynomial
with coefficients 
$a_0=0\,,\, a_1=3\, ,\, a_2=6\, ,\, a_3=2\, ,\, a_4=4 \, ,\, a_5=4$,  i.e.

\begin{eqnarray*}
f(x)&=& 3 x +6 \frac{x(x-1)}{2} + 2 \frac{x(x-1)(x-2)}{2 }+  4\ \frac{x(x-1)(x-2)(x-3)}{8} \\
&&\qquad+  4\ \frac{x(x-1)(x-2)(x-3)(x-4)}{8}\\
&=& 3 x +3 {x(x-1)} + {x(x-1)(x-2)}+ \ \frac{x(x-1)(x-2)(x-3)}{2} \\
&&\qquad+ \  \frac{x(x-1)(x-2)(x-3)(x-4)}{2}
\end{eqnarray*}
\end{example}  

\section{Characterizing congruence preserving functions
$\Z/n\Z \to \Z/m\Z$}\label{s:main}
%
Congruence preserving  functions
$f\colon\Z/n\Z\to \Z/m\Z$ can be  characterized by a simple condition on the coefficients
of the rat-polynomial representation of $f$ given in
Theorem \ref{uniqueRepresentation}.

\subsection{Main theorem}\label{ss:main}
%
%
As in \cite{cgg14} we need  the unary least common  multiple.
\begin{definition}\label{def:lcm}
For $k\in\N\setminus\{0\}$, $\lcm(k)$ is the least common multiple of
all positive integers less than or equal to $k$.
By convention, $\lcm(0)=1$.
\end{definition}
\begin{theorem}\label{thm:main}
Consider a function $f\colon \Z/n\Z \to \Z/m\Z$, $n,m\geq 1$, and let
\ $f = \sum_{k=0}^{n-1} a_k\,P_{k}$ be its representation
given by equation \eqref{eq:main} of Theorem~\ref{uniqueRepresentation}.
Then $f$ is congruence preserving 
if and only if $\lcm(k)$, considered  in $\Z/m\Z$,
divides $a_k$ for all $k=0,\ldots,n-1$\,.
\end{theorem}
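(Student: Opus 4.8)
The plan is to prove the two implications separately, the engine being the functions $\lcm(k)\,P_k$ together with the immediate remark that congruence preserving functions $\Z/n\Z\to\Z/m\Z$ form a $\Z/m\Z$-submodule of the free module of all such functions. Throughout I use the reformulation of Definition~\ref{def:Bhargava}: $f$ is congruence preserving iff for every $d$ dividing $m$ and all $a,b\in\{0,\ldots,n-1\}$ with $d\mid a-b$ one has $d\mid f(a)-f(b)$ in $\Z/m\Z$; this mimics the condition ``$x-y$ divides $f(x)-f(y)$'' of \cite{cgg14}, relativized to divisors of $m$.

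For sufficiency, the key step is that $\lcm(k)\,P_k$ is congruence preserving, which I would derive from a purely integral fact: for all integers $x\geq y\geq 0$, $x-y$ divides $\lcm(k)\bigl(P_k(x)-P_k(y)\bigr)$ in $\Z$. Writing $h=x-y$ and applying Vandermonde's identity $\binom{y+h}{k}=\sum_{i=0}^{k}\binom hi\binom{y}{k-i}$ yields $P_k(x)-P_k(y)=\sum_{i=1}^{k}P_i(h)\,P_{k-i}(y)$; now $i\,P_i(h)=i\binom hi=h\binom{h-1}{i-1}$, and since $i\mid\lcm(k)$ for $1\leq i\leq k$, each $\lcm(k)\,P_i(h)=\tfrac{\lcm(k)}{i}\,h\,\binom{h-1}{i-1}$ is an integer multiple of $h$; hence so is $\lcm(k)\bigl(P_k(x)-P_k(y)\bigr)$. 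Then, for $d\mid m$ and $d\mid a-b$ with $a,b\in\{0,\ldots,n-1\}$, we get $d\mid a-b\mid\lcm(k)(P_k(a)-P_k(b))$ in $\Z$, hence also in $\Z/m\Z$. Consequently, if $\lcm(k)\mid a_k$ in $\Z/m\Z$ for every $k$, writing $a_k=\lcm(k)\,c_k$ exhibits $f=\sum_k c_k\,(\lcm(k)\,P_k)$ as a $\Z/m\Z$-linear combination of congruence preserving functions, so $f$ is congruence preserving.

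For necessity I would show, by strong induction on $k$, that $\lcm(k)\mid a_k$ in $\Z/m\Z$. Granting this for all $j<k$, the function $\sum_{j<k}a_jP_j$ is congruence preserving by the previous paragraph, hence so is $g:=f-\sum_{j<k}a_jP_j=\sum_{j=k}^{n-1}a_jP_j$. Since $P_j(a)=\binom aj=0$ for $0\leq a<j$, evaluation gives $g(a)=0$ for $0\leq a<k$ and $g(k)=a_k$. For each $d\in\{1,\ldots,k\}$ dividing $m$, applying the congruence preserving property to $a=k$, $b=k-d$ (legitimate since $0\le k-d<k\le n-1$ and $d\mid a-b=d$) gives $d\mid g(k)-g(k-d)=a_k$ in $\Z/m\Z$. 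Thus the integer representative of $a_k$ is divisible by every $d\le k$ dividing $m$, hence by $\lcm\{d:\ d\le k,\ d\mid m\}$; a prime-by-prime count identifies this with $\prod_p p^{\min(\lfloor\log_p k\rfloor,\,v_p(m))}=\gcd(\lcm(k),m)$ (where $v_p$ is the $p$-adic valuation), and $\gcd(\lcm(k),m)\mid a_k$ is precisely the statement that $\lcm(k)$ divides $a_k$ in $\Z/m\Z$. This completes the induction.

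The routine ingredients are the submodule property of congruence preserving functions and the vanishing $P_j(a)=0$ for $a<j$, which trivializes the telescoping in the inductive step. I expect the two load-bearing points to be: the divisibility $h\mid\lcm(k)\binom hi$ powering the integral lemma, obtained via $i\,P_i(h)=h\binom{h-1}{i-1}$ and $i\mid\lcm(k)$; and the final arithmetic matching $\lcm\{d\le k:\,d\mid m\}$ with $\gcd(\lcm(k),m)$, where one must be careful that divisibility is read inside $\Z/m\Z$ and that congruence modulo $d$ need not be a congruence relation on $\Z/n\Z$.
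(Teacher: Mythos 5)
Your proof is correct. The sufficiency half is essentially the paper's argument: your Vandermonde computation, $P_k(y+h)-P_k(y)=\sum_{i=1}^{k}\binom{h}{i}\binom{y}{k-i}$ together with $i\binom{h}{i}=h\binom{h-1}{i-1}$ and $i\mid\lcm(k)$, is a self-contained proof of the integral divisibility $h\mid\lcm(k)\bigl(P_k(y+h)-P_k(y)\bigr)$ that the paper instead imports from \cite{cgg14} as Lemmas~\ref{lemme.p} and~\ref{lemme.a-b} (and your single identity covers both of the sums into which the paper splits $f(a)-f(b)$). Your necessity half, however, is genuinely different and much lighter than the paper's. The paper proves necessity by a double induction (Claim 1: $k\mid a_k$; Claim 2: $p\mid a_k$ for all $p\le k$, by induction on $p$ and, inside that, on $k$), with case distinctions on whether the relevant modulus divides $m$, reusing Lemmas~\ref{lemme.p} and~\ref{l:n divise Ankb} and the B\'ezout-based Lemma~\ref{l:lcm modular}. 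You instead feed the already-proved ``if'' direction back into the ``only if'' direction: granting $\lcm(j)\mid a_j$ for $j<k$, the tail $g=\sum_{j\ge k}a_jP_j$ is congruence preserving, vanishes on $\{0,\ldots,k-1\}$ and equals $a_k$ at $k$, so one application of the definition to the pair $(k,k-d)$ for each divisor $d\le k$ of $m$ gives $d\mid a_k$, and the identification $\lcm\{d\le k: d\mid m\}=\gcd(\lcm(k),m)$ (checked prime by prime) converts this into $\lcm(k)\mid a_k$ in $\Z/m\Z$. This buys a one-level induction with no case analysis and no modular-lcm lemma; what it costs is the logical dependence of necessity on sufficiency, which the paper's more laborious route avoids. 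Your handling of the one delicate point --- that for $d\mid m$ (more generally for $\gcd(c,m)$) divisibility in $\Z/m\Z$ agrees with divisibility of integer representatives --- is correct, and it is exactly where a careless version of this shortcut would break.
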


For proving  Theorem \ref{thm:main} we will need some relations involving binomial coefficients
and the unary $\lcm$ function; these relations are stated in
 the next three lemmata  (proven in  \cite{cgg14}).
\begin{lemma}\label{lemme.p}
If \ $0\leq n-k<p\leq n<m$ then $p$ divides $\lcm(k)\dbinom{n}{k}$.
\end{lemma}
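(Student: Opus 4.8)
The plan is to reduce everything to a single prime $p$ and compare $p$-adic valuations. Since we want to show that $p$ divides $\lcm(k)\binom{n}{k}$, and since the hypothesis forces $p$ to be prime (it is an integer in the open interval $(n-k,n]$ that is also $>n$... wait, $p\le n$), let me instead note that the statement is about a specific integer $p$ lying in the range $n-k < p \le n$; we may assume $p$ is prime, because it suffices to check divisibility by each prime power dividing $p$, and the argument below works verbatim for prime powers. So first I would fix a prime power $q = p^e$ with $q \mid p$ and show $q \mid \lcm(k)\binom{n}{k}$.

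Next I would use Legendre's formula / Kummer's theorem to compute $v_p\!\left(\binom{n}{k}\right)$, the $p$-adic valuation of the binomial coefficient: it equals the number of carries when adding $k$ and $n-k$ in base $p$. The key observation is that since $n-k < p \le n$, in base $p$ the integer $n$ has the form $n = p\cdot n_1 + r$ with... more directly: from $p \le n$ we get that $n$ is at least a one-digit-plus number in base $p$, while $n-k$ satisfies $0 \le n-k < p$, so $n-k$ is a single base-$p$ digit. Adding the single digit $n-k$ to $k$ to obtain $n \ge p$ forces at least one carry whenever the relevant digit of $k$ is large enough; in the borderline case where there is no carry one checks directly that $p$ already divides $\lcm(k)$ because then $k \ge p$ (since $n \ge p$ and $n - k < p$ would be contradicted, or the digit analysis gives $k$ having a nonzero digit in position $\ge 1$, forcing $k \ge p$). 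Either way: if $k \ge p$ then $p \mid \lcm(k)$ by definition of $\lcm$; and if $k < p$ then together with $n - k < p \le n$ a short computation shows $v_p\!\left(\binom nk\right) \ge 1$. Combining, $v_p(\lcm(k)) + v_p\!\left(\binom nk\right) \ge 1 = v_p(p)$, which is the desired divisibility for the prime $p$; the same bookkeeping with $v_p(\lcm(k)) \ge e$ when $k \ge p^e$ upgrades this to arbitrary prime powers $q \mid p$.

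The main obstacle I anticipate is the borderline digit case in the Kummer-theorem argument: making airtight the dichotomy "either there is a carry when adding $k$ and $n-k$ in base $p$, or else $k$ is already a multiple of $p$ (so $\lcm(k)$ absorbs the factor)." Once that case analysis is clean, assembling the prime-power valuations is routine. Since this lemma is cited as proven in \cite{cgg14}, I would in practice just quote it, but the sketch above is the self-contained argument.
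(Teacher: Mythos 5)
First, a point of reference: the paper does not actually prove Lemma~\ref{lemme.p}; it is quoted from \cite{cgg14}. So there is no in-paper proof to compare yours with, and I can only assess your sketch on its own terms. Your overall strategy (reduce to prime powers and compare $q$-adic valuations via Kummer's theorem) is viable, and your treatment of a genuinely prime $p$ is correct: either $k\geq p$, so $p$ divides $\lcm(k)$, or $k<p$, in which case $k$ and $n-k$ are single base-$p$ digits summing to $n\geq p$, forcing a carry, so $p$ divides $\binom{n}{k}$.

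The genuine gap is the sentence ``the argument below works verbatim for prime powers.'' It does not, for two reasons. (1) The hypothesis does not transfer: if $q^e$ is the exact power of the prime $q$ dividing $p$, you know $q^e\leq p\leq n$, but $n-k<p$ gives no bound of the form $n-k<q^e$ (take $p=n=12$, $k=3$: then $n-k=9\not<4$), so the ``single digit'' picture breaks. (2) The inequality you need is $v_q(\lcm(k))+v_q\bigl(\binom{n}{k}\bigr)\geq e$, and your two cases --- $k\geq q^e$, where the $\lcm$ alone suffices, and the carry argument, which alone would have to give $e$ carries --- miss the intermediate range $q\leq k<q^e$ where neither factor suffices by itself. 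Concretely, for $p=q^e=4$, $n=4$, $k=2$ one has $v_2\bigl(\binom{4}{2}\bigr)=1<2$, so the binomial coefficient does not absorb $4$; you must add $v_2(\lcm(2))=1$. The repair is real bookkeeping rather than verbatim repetition: put $f=v_q(\lcm(k))=\lfloor\log_q k\rfloor$ and prove $v_q\bigl(\binom{n}{k}\bigr)\geq e-f$ whenever $f<e$. For this, write $n=p+r$ with $0\leq r<k$ (this is exactly $n-k<p\leq n$); since $q^{i+1}$ divides $p$ for every $i\leq e-1$, and since $k<q^{f+1}\leq q^{i+1}$ for $i\geq f$, one gets $k\bmod q^{i+1}=k$ and $(n-k)\bmod q^{i+1}=q^{i+1}-(k-r)$, whose sum is $q^{i+1}+r\geq q^{i+1}$; by Kummer's criterion there is a carry at each of the $e-f$ positions $i=f,\dots,e-1$. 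With this lemma in place of your dichotomy the proof closes; as written, the composite and prime-power cases are not established.
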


\begin{lemma}\label{l:n divise Ankb}
If $n,k,b\in\{0,1,\ldots,m-1\}$ and $k\leq b$ then $n$ divides
$A^n_{k,b}=\lcm(k) \left(\dbinom{b+n}{k}-\dbinom{b}{k}\right)$.
\end{lemma}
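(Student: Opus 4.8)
The plan is to expand the difference $\dbinom{b+n}{k}-\dbinom{b}{k}$ by the Vandermonde convolution and then show, term by term, that multiplication by $\lcm(k)$ already forces divisibility by $n$. Since $k\leq b$ all the quantities involved are ordinary nonnegative integers, and Vandermonde's identity gives
$$\dbinom{b+n}{k}=\sum_{j=0}^{k}\dbinom{n}{j}\dbinom{b}{k-j}.$$
The $j=0$ term equals $\dbinom{b}{k}$, so subtracting yields
$$\dbinom{b+n}{k}-\dbinom{b}{k}=\sum_{j=1}^{k}\dbinom{n}{j}\dbinom{b}{k-j},$$
and hence $A^n_{k,b}=\sum_{j=1}^{k}\lcm(k)\dbinom{n}{j}\dbinom{b}{k-j}$. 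It therefore suffices to prove that $n$ divides $\lcm(k)\dbinom{n}{j}$ for every $j$ with $1\leq j\leq k$.

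The key identity I would invoke is the absorption relation $j\dbinom{n}{j}=n\dbinom{n-1}{j-1}$. For $1\leq j\leq k$ the integer $j$ divides $\lcm(k)$ by the very definition of the unary lcm, so $\lcm(k)/j\in\Z$, and I can write
$$\lcm(k)\dbinom{n}{j}=\frac{\lcm(k)}{j}\,\Bigl(j\dbinom{n}{j}\Bigr)=\frac{\lcm(k)}{j}\,n\,\dbinom{n-1}{j-1}=n\cdot\frac{\lcm(k)}{j}\dbinom{n-1}{j-1}.$$
The factor $\frac{\lcm(k)}{j}\dbinom{n-1}{j-1}$ is a product of integers, so $\lcm(k)\dbinom{n}{j}$ is $n$ times an integer. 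Summing over $j$ then shows $A^n_{k,b}=n\sum_{j=1}^{k}\frac{\lcm(k)}{j}\dbinom{n-1}{j-1}\dbinom{b}{k-j}$, whence $n\mid A^n_{k,b}$.

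I expect the only genuinely substantive point to be this third step: recognizing that the sole obstruction to $n\mid\dbinom{n}{j}$ is the denominator $j$ introduced by the absorption identity, and that $\lcm(k)$ is precisely engineered to clear that denominator for all $j\leq k$ simultaneously. Everything else is bookkeeping. The degenerate cases $k=0$ (where $\lcm(0)=1$ and the difference vanishes) and $n=0$ (where $A^0_{k,b}=0$) are absorbed by the empty-sum and zero conventions and need at most a one-line remark. Finally, I would emphasize that the hypotheses $n,k,b<m$ play no role in the divisibility claim itself—they are needed only for the later application of the lemma—so the argument can be phrased purely as a statement about integers.
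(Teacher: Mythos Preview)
Your argument is correct. The Vandermonde expansion isolates the difference as $\sum_{j=1}^{k}\binom{n}{j}\binom{b}{k-j}$, and the absorption identity $j\binom{n}{j}=n\binom{n-1}{j-1}$ together with $j\mid\lcm(k)$ for $1\leq j\leq k$ cleanly produces the factor $n$ in each term. The edge cases $k=0$ and $n=0$ are indeed trivial, and you are right that the bounds $n,k,b<m$ are irrelevant to the divisibility itself.

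As for comparison: the paper does not give a proof of this lemma in the text at all; it simply cites \cite{cgg14} for Lemmas~\ref{lemme.p}, \ref{l:n divise Ankb}, and \ref{lemme.a-b}. So there is no in-paper argument to set yours against. Your proof is self-contained and short enough that it could replace the citation outright; the Vandermonde-plus-absorption route is arguably the most transparent way to see why $\lcm(k)$ is exactly the right multiplier.
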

The following is an immediate consequence of Lemma \ref{l:n divise Ankb}
(set $a=b+n$).
\begin{lemma}\label{lemme.a-b}
If $m>a\geq b $ then $a-b$ divides
$\lcm(k) \left(\dbinom{a}{k}-\dbinom{b}{k}\right)$
for all $k\leq b$.
\end{lemma}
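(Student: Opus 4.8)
The plan is to derive this lemma directly from Lemma~\ref{l:n divise Ankb} by the substitution flagged in the parenthetical remark, namely setting $a = b+n$, equivalently $n = a-b$. Essentially all of the content is already contained in Lemma~\ref{l:n divise Ankb}; the only thing to do is to check that this substitution is legitimate, i.e.\ that the hypotheses of that lemma are met under the present hypotheses $m > a \geq b$ and $k \leq b$.

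First I would set $n = a - b$. Since $a \geq b$ we have $n \geq 0$, and since $0 \leq b \leq a < m$ we get $n = a - b \leq a \leq m-1$, so $n \in \{0,1,\ldots,m-1\}$. Likewise $b \leq a \leq m-1$ gives $b \in \{0,1,\ldots,m-1\}$, and from $k \leq b$ we obtain both $k \in \{0,1,\ldots,m-1\}$ and the required inequality $k \leq b$. Thus every hypothesis of Lemma~\ref{l:n divise Ankb} holds for this choice of $n$, and the substitution is valid.

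Next I would observe that with $n = a-b$ one has $b + n = a$, so that
\[
A^{n}_{k,b} = \lcm(k)\left(\dbinom{b+n}{k} - \dbinom{b}{k}\right) = \lcm(k)\left(\dbinom{a}{k} - \dbinom{b}{k}\right).
\]
Lemma~\ref{l:n divise Ankb} asserts that $n$ divides $A^{n}_{k,b}$, which is exactly the claim that $a-b$ divides $\lcm(k)\left(\dbinom{a}{k} - \dbinom{b}{k}\right)$, as desired.

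Because the argument is a plain substitution, there is no genuine obstacle to overcome; the only point deserving a moment's care is the degenerate case $a = b$, where $n = 0$. There the quantity $\dbinom{a}{k} - \dbinom{b}{k}$ vanishes, so the right-hand side is $0$, and since $0$ divides $0$ under the usual convention the conclusion holds in that case as well. For the generic case $a > b$ the divisibility is the nontrivial statement already established in Lemma~\ref{l:n divise Ankb}.
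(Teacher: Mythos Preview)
Your proof is correct and follows exactly the approach indicated in the paper: the lemma is obtained from Lemma~\ref{l:n divise Ankb} by the substitution $n=a-b$, and you have carefully verified that the hypotheses transfer. The paper states this as a one-line consequence; your additional check of the degenerate case $a=b$ is harmless and correct.
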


Besides these  lemmata, we shall use a classical result in $\Z/m\Z$.

\begin{lemma}\label{l:lcm modular}
Let $a_1,\ldots,a_k\geq1$ and $c$ be their least common multiple.
If $a_1,\ldots,a_k$ all divide $x$ in  $\Z/m\Z$
then so does $c$.
\end{lemma}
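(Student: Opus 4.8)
The strategy is to translate divisibility in the ring $\Z/m\Z$ into ordinary integer divisibility via greatest common divisors, and then to reduce the whole statement to a single identity about $p$-adic valuations. First I would recall the classical solvability criterion for linear congruences: for a positive integer $a$ and an element $x\in\Z/m\Z$, the equation $az\equiv x\pmod m$ has a solution $z$ if and only if $\gcd(a,m)$ divides $x$. This is well defined on $\Z/m\Z$ because $\gcd(a,m)$ divides $m$, so whether it divides a representative of $x$ does not depend on the chosen representative. Applying this to each $a_i$, the hypothesis ``$a_i$ divides $x$ in $\Z/m\Z$'' becomes the integer condition that $g_i:=\gcd(a_i,m)$ divides $x$, and the goal ``$c$ divides $x$ in $\Z/m\Z$'' becomes ``$\gcd(c,m)$ divides $x$''.

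Next, fixing an integer representative of $x$, each $g_i$ divides it, so by the elementary property of the integer least common multiple their lcm $L:=\lcm(g_1,\ldots,g_k)$ divides $x$ as well. Hence the proof will be finished as soon as I establish the identity $\gcd(c,m)=\lcm\bigl(\gcd(a_1,m),\ldots,\gcd(a_k,m)\bigr)$, i.e. $\gcd(c,m)=L$: combining it with $L\mid x$ gives $\gcd(c,m)\mid x$, which by the criterion of the first step is exactly ``$c$ divides $x$ in $\Z/m\Z$''.

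I would prove this identity prime by prime. Fixing a prime $p$ and writing $\alpha_i=v_p(a_i)$ and $\mu=v_p(m)$, the left-hand side has $p$-adic valuation $\min(\max_i\alpha_i,\mu)$, since $v_p(c)=\max_i\alpha_i$, while the right-hand side has valuation $\max_i\min(\alpha_i,\mu)$, since $v_p(g_i)=\min(\alpha_i,\mu)$. The only genuine content of the argument—and the step I would treat as the crux—is the distributive identity $\min(\max_i\alpha_i,\mu)=\max_i\min(\alpha_i,\mu)$. This is dispatched by a two-case analysis: if $\mu\geq\max_i\alpha_i$ then every $\min(\alpha_i,\mu)=\alpha_i$ and both sides equal $\max_i\alpha_i$; if $\mu<\max_i\alpha_i$ then some $\alpha_j$ exceeds $\mu$, forcing both sides to equal $\mu$. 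Everything else is bookkeeping, so the proof is essentially complete once this identity and the linear-congruence criterion are in hand.

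As a cross-check I would note an alternative route that isolates the same core fact. Using the Chinese Remainder Theorem one splits $\Z/m\Z$ into prime-power factors $\Z/p^e\Z$, and since divisibility in a product ring is componentwise it suffices to argue in each factor. In $\Z/p^e\Z$ the principal ideals $(p^0)\supset(p^1)\supset\cdots\supset(p^e)=(0)$ form a chain, and $(a_i)=(p^{\min(\alpha_i,e)})$; the hypothesis places $x$ in the intersection $\bigcap_i(a_i)$, which, being an intersection of members of a chain, is the smallest of them, namely $(p^{\max_i\min(\alpha_i,e)})$. The valuation identity identifies this with $(c)$, giving $x\in(c)$. Either way, the hard part is the commutation of the cap at $\mu$ with the maximum, and nothing deeper is required.
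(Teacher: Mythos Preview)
Your argument is correct but follows a genuinely different route from the paper's. The paper reduces to $k=2$ by induction, writes $c=a_1b_1=a_2b_2$ with $b_1,b_2$ coprime, picks $t,u$ with $x=a_1t=a_2u$ in $\Z/m\Z$, and then uses a B\'ezout relation $\alpha b_1+\beta b_2=1$ to produce an explicit witness: $c(t\alpha+u\beta)=a_1b_1t\alpha+a_2b_2u\beta\equiv x(\alpha b_1+\beta b_2)=x$. This is a short, constructive computation that never leaves $\Z/m\Z$. By contrast, you externalize the problem to the integers via the solvability criterion for linear congruences, reducing everything to the lattice identity $\gcd(\lcm_i a_i,\,m)=\lcm_i\gcd(a_i,m)$, checked prime by prime. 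Your approach handles all $k$ at once and isolates the algebraic content cleanly (distributivity of $\min$ over $\max$), whereas the paper's B\'ezout trick is shorter, more self-contained, and yields the quotient explicitly. Both are perfectly valid; they simply emphasize different pieces of elementary number theory.
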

\begin{proof}
It suffices to consider the case $k=2$ since the passage to any $k$
is done via a straightforward induction.
Let $c=a_1b_1=a_2b_2$ with $b_1,b_2$ coprime.
Let $t,u$ be such that $x=a_1t=a_2u$ in $\Z/m\Z$\,.
Then $x\equiv a_1t\equiv a_2u\pmod m$.
Using B\'ezout identity, let $\alpha,\beta\in\Z$ be such that
$\alpha b_1+\beta b_2=1$.
Then 
$$
c(t\alpha+u\beta) 
\ =\  a_1b_1t\alpha + a_2b_2u\beta\\
\ \stackrel{\bmod k}{\equiv}\  x\alpha b_1+x\beta b_2
\ =\  x
$$
hence $c(t\alpha+u\beta) = x$,
proving that $c$ divides $x$ in $\Z/m\Z$.
\end{proof}
%

 \begin{proof}[Proof of Theorem \ref{thm:main}]
  {\bf ``Only if" part.}
\  Assume $f:\Z/n\Z\to\Z/m\Z$ is congruence preserving and consider its decomposition
$f(x)=\sum_{k=0}^{n-1} a_k .P(x)$ given by Theorem~\ref{uniqueRepresentation}.
We show that
$\lcm(k)$ divides $a_k$ for all $k<n$. 
\smallskip\\
{\it{\normalfont\bf Claim 1.} For all $m>k\geq1$, $k$ divides $a_k$.}

\begin{proof} By induction on $k$.
Recall that $f(k)= a_i \sum_{i= 0}^{n-1} \binom{k}{i} =a_i \sum_{i= 0}^k \binom{k}{i} $ by noting that $\binom{k}{i} =0$ for $i>k$.\\
{\it Induction Basis:}
The case $k=1$ is trivial.
For $k=2$, if 2 does not divide $m$ then 2 is invertible in $\Z/m\Z$, hence  $2$ divides $a_2$.
Otherwise, observe that, as  $2$ divides $2-0$, and $f$ is congruence preserving,  $2$ divides $f(2)-f(0)= 2a_1 +a_2$ hence $2$ divides $a_2$.
\\
{\it Induction:} assuming that $\ell$ divides $a_\ell$ for every $\ell\leq k$,
we prove that $k+1$ divides $a_{k+1}$.  Assume first that $k+1$ divides $m$, then
\begin{eqnarray}\label{eq:k+1-0}
f(k+1) -f(0)&=&
(k+1) a_1 + \left(\sum_{i=2}^k \binom{k+1}{i} a_i\right)+a_{k+1}\notag
\\
&=&(k+1) a_1 + 
\left(\sum_{i=2}^k (k+1)\;\frac{a_i}{i}\;\binom{k}{i-1}\right)
+a_{k+1}
\end{eqnarray}
By the induction hypothesis, ${a_i\over i}$ is an integer for $i\leq k$.
Since $f$ is congruence preserving, $k+1$ divides $f(k+1) -f(0)$
hence $k+1$ divides the last term $a_{k+1}$ of the sum. 

Assume now that $k+1$ does not divide $m$, then
$k+1=a\times b$ with $b$ dividing $m$ and $a$ coprime with $m$.  Hence $a$ is invertible in $\Z/m\Z$ and,  by the 
congruence preservation property of $f$, $b$ divides $f(k+1) -f(0)$ ; as  $b$ divides  $k+1$, equation \eqref{eq:k+1-0} implies that  $b$ divides $a_{k+1}$, and $a\times b$ also divides $a_{k+1}$ (by invertibility of $a$  and Lemma \ref{l:lcm modular}).
\end{proof}
%
\smallskip
\noindent
{\it{\normalfont\bf Claim 2.} For all $1\leq p\leq k$, $p$ divides $a_k$.
Thus, $\lcm(k)$ divides $a_k$.}
\begin{proof}
The last assertion of  Claim 2 is a direct application of
Lemma~\ref{l:lcm modular} to the first assertion
which we now prove.
The case $p=1$ is trivial.
We prove the  $p\geq2$ by induction on $p$.
\\
\textbullet\;{\it Basic case $p=2$ : $2$ divides $a_k$ for all $k\geq2$.}  If 2 does not divides $m$, then 2 is invertible and divides all numbers in $\Z/m\Z$;  assume that 2 divides $m$. 
We argue by induction on $k\geq2$.
\\
- {\it Basic case (of the basic case).} 
Apply Claim~1: $2$ divides $a_2$.
\\
- {\it Induction step (of the basic case).}  Assuming that $2$ divides $a_i$ for all $2\leq i\leq k$
we prove that $2$ divides $a_{k+1}$. Two cases can occur.
\\
{\it Subcase 1: $k+1$ is odd.}
Then, $k$ is even, 2 divides $k$ and,  by congruence preservation,  2 divides $f(k+1)-f(1)$.
We have
\\\centerline{$f(k+1)-f(1)
=ka_1+\left(\sum_{i=2}^k a_i {{k+1}\choose i}\right) +a_{k+1}$,} 
 $2$ divides the $a_i$ for $2\leq i\leq k$ by the induction hypothesis, 2 also divides $k$,
hence, $2$ divides $a_{k+1}$.
\\
{\it Subcase 2: $k+1$ is even.}
Then 2 divides $f(k+1)-f(0)$.
Now,
\\\centerline{$f(k+1)-f(0)
=(k+1)a_1+\left(\sum_{i=2}^k a_i {{k+1}\choose i} \right)+a_{k+1}$,}
$k+1$ is even and $2$ divides the $a_i$ for $2\leq i\leq k$ by the induction hypothesis,
thus, $2$ divides $a_{k+1}$.
\\
\textbullet\;{\it Induction step: $p\geq2$ and $p+1<n$. Assume that

\begin{equation}\label{eq:induction step on p}
\text { for all } q\leq p\  ,\  q \text{  divides } a_\ell\  \text{  for all }\ \ell\  \text{   such that } q\leq\ell<n
 \end{equation}
 and  prove that $p+1$ divides $a_k$ for all $k$ such that $ p+1\leq k <n$.}
Again, we use induction on $k\geq p+1$ and we assume that $k$ divides $m$ in order to use congruence preservation. When $k$ does not divide $m$ we factorize $k=a\times b$ with  with $b$ dividing $m$ and $a$ coprime with $m$ and a similar proof will show that $b$ divides $a_k$ and $k$ divides $a_k$ (cf. the proof of Induction in Claim 1).
\\
- {\it Basic case (of the induction step) $k=p+1$.} 
Follows from Claim~1:: $p+1$ divides $a_{p+1}$.
\\
- {\it Induction step (of the induction step).} Assuming that $p+1$ divides $a_i$ for all $i$ such that $p+1\leq i\leq k$,
we prove that $p+1$ divides $a_{k+1}$. As $p+1$ divides $k+1-(k-p)$ and
 $f$ is congruence preserving, $p+1$ divides $f(k+1)-f(k-p)$
which is given by
\begin{multline}\label{eq:p+1 k+1 - k-p}
f(k+1)-f(k-p)=
\sum\limits_{i=1}^{k-p} a_i \left({{k+1}\choose i}- {{k-p}\choose i}\right)\\
\;+\
\left(\sum\limits_{i=k+1-p}^{k} a_i {{k+1}\choose i}\right)
\;+\;a_{k+1}
\end{multline}
\\
First look at the terms of the first sum corresponding to $1\leq i\leq p$.
The induction hypothesis \eqref{eq:induction step on p} on $p$ insures that $q$ divides $a_k$
for all $q\leq p$ and $k\geq q$.
In particular, letting $k=i$ and using  Lemma~\ref{l:lcm modular}, we see that $\lcm(i)$ divides $a_i$ in $\Z/m\Z$.
Since $(k+1)-(k-p)=p+1$, Lemma~\ref{l:n divise Ankb} insures that
$p+1$ divides $\lcm(i)\left({{k+1}\choose i}- {{k-p}\choose i}\right)$.
A fortiori, 
$p+1$ divides $a_i \left({{k+1}\choose i}- {{k-p}\choose i}\right)$.
\\
Now turn to the terms of the first sum corresponding to
$p+1\leq i\leq k-p$ (if there are any).
The induction hypothesis (on $k$) insures that
$p+1$ divides $a_i$ for all $p+1\leq i\leq k$.
Thus, each term of the first sum is divisible by $p+1$.
\\
Consider now the terms of the second sum.
By the induction hypothesis (on $k$),
$p+1$ divides $a_i$ for all $p+1\leq i\leq k$.
It remains to look at the terms associated with the $i$'s
such that $k+1-p\leq i\leq p$
(there are such $i$'s in case $k+1-p<p+1$).
For such $i$'s we have $0\leq (k+1)-i\leq (k+1)-p<p+1\leq k+1$
and Lemma \ref{lemme.p}
(used with $k+1,i$ and $p+1$  in place of $n,k$ and $p$)
insures that $p+1$ divides $\lcm(i) {k+1\choose i}$.
Now, for such $i$'s, the induction hypothesis  \eqref{eq:induction step on p} on $p$
insures that $\lcm(i)$ divides $a_i$.
Thus, $p+1$ divides each $a_i {k+1\choose i}$.

Since $p+1$ divides the $k$ first  terms of the right-hand side of
\eqref{eq:p+1 k+1 - k-p} and also divides the left-hand side,
it must divide the last term  $a_{k+1}$ of the right-hand side.
This finishes the proof of the inductive step of the inductive step
hence also the proof of Claim 2, and  of the ``only if" part of the Theorem.
\end{proof}
\smallskip\noindent
{\bf  ``If" part of Theorem \ref{thm:main}}
%
Assume all the $a_k$s in equation \eqref{eq:main} are divisible by $lcm(k) $ and prove that $f$ is congruence preserving , i.e. that, for all $a,b\in\{0,\ldots,n-1\}$,  if $a-b$ divides $m$ then $a-b$ divides $f(a) -f(b)$ in $\Z/m\Z$. 

If all the $a_k$s in equation \eqref{eq:main} are divisible by $lcm(k) $ then $f$  can be written in the form
$f(n)= \sum_{k= 0}^n b_k \lcm(k){n \choose k}$.
Consequently,
$$
f(a)-f(b)= \left(\sum_{k= 0}^b 
                         b_k \lcm(k)\Big({a \choose k} -{b\choose k}\Big)\right)
+ \sum_{k= b+1}^a b_k \lcm(k){a \choose k}
$$

By Lemma \ref{lemme.a-b}, $a-b$  divides each term of the first sum.

Consider the terms of the second sum.
For $b+1\leq k\leq a$, we have $0\leq a-k<a-b\leq a$
and Lemma \ref{lemme.p}
(used with $a,k$ and $a-b$  in place of $n,k$ and $p$)
insures that $a-b$ divides $\lcm(k){a \choose k}$.
Hence,  $a-b$ divides each term of the second sum.
\end{proof}
%

\subsection{On a family of generators}\label{ssg:main}
%
We now sharpen the degree of the rat-polynomial representing a congruence preserving  function $\Z/n\Z \to \Z/m\Z$.
We need first some properties of the $lcm$ function and a definition.
\begin{lemma} 
 In $\Z/m\Z$  we have $lcm(k)= u\times \prod p_i^{\alpha_i^k} $ with $u$ invertible in $\Z/m\Z$, $p_i $  prime and dividing $m$, and $ \alpha_i^k=\max\{\beta_i | p_i^{\beta_i}\leq k\}\;$. 
%
\end{lemma}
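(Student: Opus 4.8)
The plan is to reduce the statement to the classical prime factorization of the integer $L_k=\lcm(1,2,\ldots,k)$ and then to single out, modulo $m$, the factor supported on the primes dividing $m$; the remaining factor will automatically be a unit of $\Z/m\Z$.

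First I would recall the standard fact that, for $k\geq1$,
\[
L_k \;=\; \prod_{p\leq k} p^{\,e_p(k)}\,,\qquad e_p(k)=\max\{\beta\in\N\mid p^\beta\leq k\}\,,
\]
the product being taken over the primes $p\leq k$, with the convention $e_p(k)=0$ for $p>k$ (which is consistent, since $p^0=1\leq k$ whenever $k\geq1$). Indeed, for a fixed prime $p$ the highest power of $p$ dividing an element of $\{1,\ldots,k\}$ is reached by the integer $p^{\,e_p(k)}\in\{1,\ldots,k\}$ itself, so the $p$-adic valuation of $L_k$ equals $e_p(k)$; this exponent is finite as $p\geq2$.

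Next I would split the primes occurring above according to whether they divide $m$. Let $p_1,\ldots,p_r$ be the primes dividing $m$ and $q_1,\ldots,q_s$ the primes $\leq k$ not dividing $m$; then the factorization above reads, in $\Z$,
\[
L_k \;=\; \Bigl(\prod_{i=1}^{r} p_i^{\,e_{p_i}(k)}\Bigr)\cdot\Bigl(\prod_{j=1}^{s} q_j^{\,e_{q_j}(k)}\Bigr)\,,
\]
where a prime $p_i$ dividing $m$ but exceeding $k$ simply contributes the trivial factor $p_i^{0}=1$. Put $u=\prod_{j=1}^{s} q_j^{\,e_{q_j}(k)}$. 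Each $q_j$ is coprime with $m$, hence so is $u$, so the image of $u$ in $\Z/m\Z$ is invertible. Reducing the displayed identity modulo $m$ then yields $\lcm(k)=u\times\prod_{i=1}^{r} p_i^{\alpha_i^k}$ in $\Z/m\Z$, with $\alpha_i^k=e_{p_i}(k)=\max\{\beta_i\mid p_i^{\beta_i}\leq k\}$, which is exactly the claimed form.

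I do not expect any genuine difficulty: the only points requiring a little attention are the bookkeeping of the primes dividing $m$ that exceed $k$ (they must appear with exponent $0$) and the classical computation of the $p$-adic valuation of $L_k$. The concluding step is the elementary remark that a product of integers each coprime with $m$ is a unit of $\Z/m\Z$.
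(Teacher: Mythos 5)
Your proof is correct: the prime factorization of $\lcm(1,\ldots,k)$ with exponents $\max\{\beta \mid p^{\beta}\leq k\}$, the splitting of the primes according to whether they divide $m$, and the observation that the cofactor is coprime to $m$ and hence a unit of $\Z/m\Z$ is exactly the intended (standard) argument. The paper in fact states this lemma without any proof, so your write-up supplies the missing justification and raises no issues beyond the harmless bookkeeping you already address (primes dividing $m$ but exceeding $k$ contribute exponent $0$).
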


\begin{definition}\label{def:mu}
For $m\geq1$, with prime factorization  $m=p_1^{\alpha_1}\cdots p_\ell^{\alpha_\ell}$, let
$\mu(m)= \max_{i=1,\ldots,\ell}\;{p_i^{\alpha_i} }$ be the largest power of prime dividing $m$. 
\end{definition}
\begin{example}  In $\Z/m\Z$ the element $\lcm(k)$ is null for $k$ large enough.\\
In  $ \Z/8\Z$\quad 
\begin{tabular}{| c | c | c | c | c | c |c | c | c |}
 \hline			
 $ k$ & 1 & 2 & 3&4&5& 6 & 7  & 8 \\ \hline
$\lcm(k)\cong$ & 1 & 2 & 2&4&4&4&4&0 \\ \hline
\end{tabular}\quad $\mu(8)=8$
\\
 In  $ \Z/12\Z$
\begin{tabular}{| c | c | c | c | c | c |c | c | c | c | c | c |}
 \hline			
 $ k$ & 1 & 2 & 3&4&5& 6 & 7 & 8&9&10 &11 \\ \hline
$ \lcm(k)\cong$ & 1 & 2 & 6&0&0&0&0&0&0&0&0 \\ \hline
\end{tabular}\quad $\mu(12)=4$

\end{example} 

\begin{lemma}\label{l:lcm}
In $\Z/m\Z$, $\mu(m)$ is the least integer such that $\lcm(k)$ is null and $\forall k\geq\mu(m)$, $\lcm(k)$ is null.
\end{lemma}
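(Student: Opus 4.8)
The plan is to read the result straight off the preceding lemma, which supplies the prime factorization of $\lcm(k)$ when viewed in $\Z/m\Z$. The first step is the trivial observation that an element of $\Z/m\Z$ is null precisely when $m$ divides it as an integer, and that multiplying by an invertible element of $\Z/m\Z$ does not affect whether the result is zero. Hence, writing $\lcm(k)=u\times\prod_i p_i^{\alpha_i^k}$ as in the previous lemma, $\lcm(k)$ is null in $\Z/m\Z$ if and only if $m$ divides the integer $\lcm(k)=\lcm(1,\dots,k)$.

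Next I would break this divisibility up prime by prime. With $m=p_1^{\alpha_1}\cdots p_\ell^{\alpha_\ell}$, the condition $m\mid\lcm(k)$ is equivalent to $p_i^{\alpha_i}\mid\lcm(k)$ for every $i$. By the preceding lemma the exponent of $p_i$ in $\lcm(k)$ is $\alpha_i^k=\max\{\beta\mid p_i^{\beta}\le k\}$, so $p_i^{\alpha_i}\mid\lcm(k)$ holds exactly when $\alpha_i^k\geq\alpha_i$, i.e. when $p_i^{\alpha_i}\le k$. Therefore $\lcm(k)$ is null in $\Z/m\Z$ if and only if $k\ge p_i^{\alpha_i}$ for all $i$, that is, if and only if $k\ge\max_i p_i^{\alpha_i}=\mu(m)$.

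This equivalence immediately gives both assertions of the lemma: the least $k$ for which $\lcm(k)$ vanishes in $\Z/m\Z$ is exactly $\mu(m)$, and since the set $\{k\mid k\ge\mu(m)\}$ is upward closed (equivalently, since $\lcm(k)$ divides $\lcm(k+1)$ as integers), $\lcm(k)$ remains null for every $k\ge\mu(m)$. I do not anticipate any genuine obstacle here; the only point requiring a word of care is the presence of the unit $u$, namely the remark that an invertible multiple of an element of $\Z/m\Z$ is zero if and only if that element is zero.
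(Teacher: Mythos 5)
Your proof is correct, and it follows exactly the route the paper intends: the paper states this lemma without proof, presumably regarding it as immediate from the preceding lemma on the factorization $\lcm(k)=u\times\prod_i p_i^{\alpha_i^k}$ in $\Z/m\Z$, which is precisely what you use. Your prime-by-prime reduction of $m\mid\lcm(k)$ to $p_i^{\alpha_i}\le k$ for all $i$, hence to $k\ge\mu(m)$, is complete and handles the one point of care (the unit $u$) correctly.
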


 \begin{remark} 
 (1) Either $\mu(m)=m$ or $\mu(m)\leq m/2$\,.
\\
(2) In general, $\mu(m)$ is greater than $\mu'(m)$,
the least $k$ such that $m$ divides $k!$ considered in  \cite{ch1995}:
for $m=8$, $\mu(m)=8$ whilst $\mu'(m)=4$. 
\end{remark} 

Using Lemma~\ref{l:lcm}, we can get a better version of
Theorem~\ref{thm:main}.

\begin{theorem}\label{carac} 
Function $f\colon \Z/n\Z \to \Z/m\Z$ is congruence preserving  iff it can be represented by a  rational  polynomial $P=\sum_{k=0}^{p} a_k \binom{x}{k}$  with degree  $p<\min(n,\mu(m))$ and such that $lcm(k)$ divides $a_k$ for all $k\leq p$. 
\end{theorem}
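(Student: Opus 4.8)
The plan is to derive Theorem~\ref{carac} from Theorem~\ref{thm:main} together with Lemma~\ref{l:lcm}, essentially by truncating the representation of a congruence preserving function. The only real content beyond Theorem~\ref{thm:main} is the sharpening of the degree bound from $n-1$ down to $\min(n,\mu(m))-1$, and this is exactly what Lemma~\ref{l:lcm} buys us: in $\Z/m\Z$ one has $\lcm(k)=0$ for every $k\geq\mu(m)$.

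First I would prove the reverse implication, which is the easy direction: if $f$ is represented by a polynomial $P=\sum_{k=0}^{p}a_k\binom{x}{k}$ with $\lcm(k)$ dividing $a_k$ in $\Z/m\Z$ for all $k\leq p$, then (extending the sum with zero coefficients $a_{p+1}=\cdots=a_{n-1}=0$, which are trivially divisible by the corresponding $\lcm$) the decomposition of $f$ provided by Theorem~\ref{uniqueRepresentation} has all its coefficients divisible by the appropriate $\lcm(k)$, so the ``if'' part of Theorem~\ref{thm:main} applies and $f$ is congruence preserving. No constraint on $p$ is needed here.

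For the forward implication, suppose $f\colon\Z/n\Z\to\Z/m\Z$ is congruence preserving and write $f=\sum_{k=0}^{n-1}a_kP_k$ as in Theorem~\ref{uniqueRepresentation}. By the ``only if'' part of Theorem~\ref{thm:main}, $\lcm(k)$ divides $a_k$ in $\Z/m\Z$ for every $k$. Now for $k\geq\mu(m)$ Lemma~\ref{l:lcm} gives $\lcm(k)=0$ in $\Z/m\Z$; since $\lcm(k)$ divides $a_k$, this forces $a_k$ to be a multiple of $0$, i.e. $a_k=0$ in $\Z/m\Z$. Hence all coefficients of index $\geq\mu(m)$ vanish, and also all coefficients of index $\geq n$ vanish (there are none), so $f$ is represented by the truncated polynomial $\sum_{k=0}^{p}a_k\binom{x}{k}$ where $p$ is the largest index $<\min(n,\mu(m))$ with $a_p\neq0$ (and $p$ may be taken $<\min(n,\mu(m))$ in all cases, interpreting the empty/zero sum appropriately when $f$ is constant or zero). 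The divisibility condition $\lcm(k)\mid a_k$ for $k\leq p$ is inherited directly from Theorem~\ref{thm:main}.

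I do not anticipate a genuine obstacle: the argument is a short corollary of the two already-established results. The only point requiring a little care is the bookkeeping around the degree $p$ — namely, making sure that ``degree $<\min(n,\mu(m))$'' is consistent with the definition of degree in Definition~\ref{def:rat-pol} (the smallest degree among representing polynomials) and that edge cases such as $n=1$, $m=1$, or $f\equiv 0$, where the representation may be the empty sum, are not misstated. One should also note that $\mu(m)\leq m$ always, so $\min(n,\mu(m))\leq\min(n,m)$, which is why the claimed bound indeed improves on Corollary~\ref{basis}(1).
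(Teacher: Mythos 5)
Your proposal is correct and follows essentially the same route as the paper: both directions reduce to Theorem~\ref{thm:main}, and the degree bound comes from Lemma~\ref{l:lcm} via the observation that $\lcm(k)=0$ in $\Z/m\Z$ for $k\geq\mu(m)$, which forces $a_k=0$ for those indices. The paper's own proof is just this one-line observation; your write-up adds only harmless bookkeeping.
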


\begin{proof}
For $k\geq \mu(m) $, $m$ divides $lcm(k)$ hence the coefficient $a_k$ is 0.
\end{proof}
\begin{theorem} (1) Every congruence preserving function $f:\Z/n\Z\to\Z/m\Z$
is rat-polynomial with degree less than $\mu(m)$.
\\
(2) The family of rat-polynomial functions
$\+F=\{lcm(k)(P_k) |  0\leq k<\min(n,\mu(m))\}$ generates the set of congruence preserving functions.
\\
(3) $\+F$ is a basis of  the set of congruence preserving functions
if and only if
$m$ has no prime divisor $p<\min(n,m)$
(in case $n\geq m$ this means that $m$ is prime)\,.

\end{theorem}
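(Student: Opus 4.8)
The plan is to obtain all three assertions directly from Theorem~\ref{thm:main} (equivalently, Theorem~\ref{carac}), the uniqueness part of Theorem~\ref{uniqueRepresentation}, and the vanishing of $\lcm(k)$ in $\Z/m\Z$ for $k\ge\mu(m)$ recorded in Lemma~\ref{l:lcm}; throughout, a \emph{basis} of the $(\Z/m\Z)$-module of congruence preserving functions means a generating family that is $(\Z/m\Z)$-linearly independent. \emph{Parts (1) and (2).} Write $f=\sum_{k=0}^{n-1}a_kP_k$ as in Theorem~\ref{uniqueRepresentation}. By Theorem~\ref{thm:main}, $f$ is congruence preserving iff $\lcm(k)$ divides $a_k$ in $\Z/m\Z$ for every $k$; since $\lcm(k)=0$ in $\Z/m\Z$ for $k\ge\mu(m)$, divisibility forces $a_k=0$ there, so $f=\sum_{k<\min(n,\mu(m))}a_kP_k$ is represented by a rational polynomial of degree $<\mu(m)$, which is (1) (this is also part of Theorem~\ref{carac}). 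Writing each $a_k=\lcm(k)\,b_k$ with $b_k\in\Z/m\Z$ turns this into $f=\sum_{k<\min(n,\mu(m))}b_k\bigl(\lcm(k)P_k\bigr)$, so $\+F$ generates the congruence preserving functions; conversely every $(\Z/m\Z)$-linear combination of members of $\+F$ is congruence preserving by the ``if'' part of Theorem~\ref{thm:main}. This gives (2).

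\emph{Part (3).} Since $\+F$ already generates, it is a basis iff it is $(\Z/m\Z)$-free. By uniqueness of the representation, a relation $\sum_k c_k\bigl(\lcm(k)P_k\bigr)=\sum_k\bigl(c_k\lcm(k)\bigr)P_k=0$ holds iff $c_k\lcm(k)=0$ in $\Z/m\Z$ for every $k<\min(n,\mu(m))$; hence $\+F$ is free iff, for each such $k$, multiplication by $\lcm(k)$ is injective on $\Z/m\Z$, i.e. $\gcd(\lcm(k),m)=1$, i.e. no prime $p\le k$ divides $m$ (the prime divisors of $\lcm(k)$ being exactly the primes $\le k$). Taking $k=\min(n,\mu(m))-1$, this is precisely the condition that $m$ has no prime divisor $<\min(n,\mu(m))$; for the ``only if'' one makes freeness fail explicitly by taking, when a prime $p\mid m$ satisfies $p\le k_0<\min(n,\mu(m))$, the relation with $c_{k_0}=m/\gcd(\lcm(k_0),m)\ne0$ and all other $c_k=0$ (one also records that the $\lcm(k)P_k$ with $k<\min(n,\mu(m))$ are pairwise distinct and nonzero, since $\lcm(k)P_k(k)=\lcm(k)\ne0$ and the $P_k$ form a basis by Corollary~\ref{basis}).

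It remains to check that ``no prime divisor $<\min(n,\mu(m))$'' agrees with ``no prime divisor $<\min(n,m)$''. If $n\le\mu(m)$ then, as $\mu(m)\le m$, both thresholds equal $n$ and there is nothing to prove. If $n>\mu(m)$ then $\min(n,\mu(m))=\mu(m)$, and one invokes the elementary fact that $m$ has a prime divisor $<\mu(m)$ exactly when $m$ is composite (if $m=p^a$ with $a\ge2$ then $p<p^a=\mu(m)$; if $m$ has distinct prime factors $p<q$ then $p<q\le\mu(m)$; and every prime divisor of $m$ is $\le\mu(m)$): so ``no prime $<\mu(m)$ divides $m$'' says $m$ is prime, whence $\mu(m)=m\le n$, $\min(n,m)=m$, and ``no prime $<m$ divides $m$'' again says $m$ is prime (the composite and $m=1$ subcases being immediate). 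For $n\ge m$ the condition reads ``no prime $<m$ divides $m$'', i.e. $m$ is prime. The whole argument is a routine translation through Theorems~\ref{thm:main} and~\ref{uniqueRepresentation}; the step demanding care --- and the likely main obstacle --- is this last matching of thresholds, namely verifying that the bound $\mu(m)$ produced by the characterization and the bound $m$ appearing in the statement impose the same divisibility condition on $m$, the delicate borderline being the case in which $\mu(m)$ is itself prime.
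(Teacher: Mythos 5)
Your proof is correct, and for parts (1) and (2) it coincides with the paper's (both are restatements of Theorem~\ref{carac}). For part (3) you take a mildly but genuinely different route to linear independence: the paper proves freeness by a fresh induction, evaluating a putative null combination at $0,1,\ldots,k$ to isolate $a_k\,\lcm(k)$ and then inverting $\lcm(k)$, whereas you invoke the uniqueness of the $P_k$-representation (Corollary~\ref{basis}) to reduce any relation $\sum_k c_k\,\lcm(k)P_k=0$ at once to $c_k\,\lcm(k)=0$ for all $k$, so that freeness becomes exactly the injectivity of multiplication by $\lcm(k)$ on $\Z/m\Z$, i.e.\ $\gcd(\lcm(k),m)=1$, for all $k<\min(n,\mu(m))$. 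This buys a two-line equivalence in place of an induction that in effect re-proves Corollary~\ref{basis}, and your explicit relation $c_{k_0}=m/\gcd(\lcm(k_0),m)$ specializes to the paper's witness $(m/p)\,\lcm(p)\,P_p=0$ when $k_0=p$ is the least prime divisor of $m$. It also forces you to confront, and you correctly resolve, the one point the paper leaves implicit: that the threshold $\min(n,\mu(m))$ produced by the module structure imposes the same divisibility condition on $m$ as the threshold $\min(n,m)$ in the statement; the paper's ``only if'' direction silently uses that the least prime $p<\min(n,m)$ dividing $m$ also satisfies $p<\mu(m)$, which is precisely your observation that a non-prime $m>1$ always has a prime divisor strictly below $\mu(m)$. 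You are right that this borderline bookkeeping is the only delicate step; everything else is a direct translation of Theorems~\ref{uniqueRepresentation} and~\ref{thm:main}.
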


\begin{proof}
(1) and (2) are restatements of Theorem \ref{carac} . We prove (3). 
\\
``Only If" part. Asssuming $m$ has a prime divisor $p<\min(n,m)$,  let $p$ be the least one.
Then $\+F$ is not linearly independant.  In $\Z/m\Z$, $\lcm({p})\neq0$ hence $\lcm({p})\,P_p$\,
is not the null function since $P_p({p})=1$. However $(m/p)\,\lcm({p})=0$ hence
$(m/p)\,\lcm({p})\,P_p$ is the null function. As $(m/p)\neq0$, we see that $\+F$ cannot be a basis.
\smallskip\\
``If" part.
Assume that $m$ has no prime divisor $p<\min(n,m)$\,.
We prove that $\+F$ is $\Z/m\Z$-linearly independent.
Suppose that the $\Z/m\Z$-linear combination
$
L\ =\ \sum_{k=0}^{\min(n,\mu(m))-1}
a_k\,\lcm(k)\,P_k
$
is the null function $\Z/n\Z\to\Z/m\Z$\,.
By induction on $k=0,\ldots,\min(n,\mu(m))-1$
we prove that $a_k=0$\,.
\\
\textbullet\ {\it Basic cases $k=0,1$.}
Since $L(0)=a_0$ 
we get $a_0=0$\,.
Since $L(1)=a_0+a_1\,1$ 
we get $a_1=0$\,.
\\
\textbullet\ {\it Induction step.}
Assuming that $k\geq2$ and $a_i=0$ for $i=0,\ldots,k-1$,
we prove that $a_k=0$\,.
Note that  $P_\ell(k)=\binom{k}{\ell}$ for $k<\ell<n$.
Since $a_i=0$ for $i=0,\ldots,k-1$,
and $P_k(k)=1$
we get $L(k)=a_k\,\lcm(k\,)$\,.
Since $k<\min(n,\mu(m))$
and $m$ has no prime divisor $p<\min(n,m)$,
the numbers $\lcm(k)$ and $m$ are coprime
hence $\lcm(k)$ is invertible in $\Z/m\Z$ and equality $L(k)=a_k\lcm(k)=0$ implies $a_k=0$\,.
\end{proof}

\section{Counting congruence preserving functions}\label{s:count}

We are now interested in the number of congruence preserving   functions  $\Z/n\Z\to\Z/m\Z$.
As two different  rational polynomials correspond to different functions by Theorem \ref{uniqueRepresentation} (unicity of the representation by a rational polynomial),
 the number of congruence preserving  functions  $\Z/n\Z\to\Z/m\Z$   is equal to the number of polynomials representing them. 
\begin{proposition}\label{ZnToZmCount}
Let $CP(n,m)$ be the number of  congruence preserving  functions  $\Z/n\Z\to\Z/m\Z$\,.
For $m=p_1^{e_1}p_2^{e_2}\cdots p_\ell^{e_\ell}$, we have
\begin{eqnarray*}
\!\!\!CP(n,m)&=&{p_1^{p_1+p_1^2+\cdots+p_1^{e_1}} \times\cdots\times p_\ell^{p_\ell+p_\ell^2+\cdots+p_\ell^{e_\ell}} }\ \text{if } n\geq \mu(m)\\
\!\!\!CP(n,m)&=&\!\!\!\prod\limits_{\{i\mid  p_i^{e_i}<\mu(m)\}} {p_i^{p_i+p_i^2+\cdots+p_i^{e_i}}} \times \!\!\!\prod\limits_{\{i\mid  p_i^{e_i}\geq\mu(m)\}}{p_i^{p_i+p_i^2+\cdots+p_i^{\lfloor \log_{p} n\rfloor}+n(e-\lfloor \log_{p} n\rfloor)}}\notag\\
&&\text{ if }  \ n< \mu(m) 
\end{eqnarray*}
Equivalently, using an \`a la Vinogradov's notation for better readability and  writing $E(p,\alpha)$ in place of $p^\alpha$ we have
\begin{eqnarray*}
\!\!\!CP(n,m)&=&\prod\limits_{i=1}^\ell E(p_i,\sum\limits_{k=1}^{e_i}p_i^k )\quad \text { if } n\geq \mu(m)\\
\!\!\!CP(n,m)&=&\!\!\!\prod\limits_{\{i\mid  p_i^{e_i}<\mu(m)\}} E(p_i,\sum\limits_{k=1}^{e_i}p_i^k ) \times \!\!\!\prod\limits_{\{i\mid  p_i^{e_i}\geq\mu(m)\}} E(p_i,\sum\limits_{k=1}^{\lfloor \log_{p} n\rfloor}  p_i^k+n(e-\lfloor \log_{p} n\rfloor))\notag\\&&
\text{ if }  \ n< \mu(m) 
\end{eqnarray*}

\end{proposition}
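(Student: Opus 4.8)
The plan is to count the polynomials $f=\sum_{k=0}^{\min(n,\mu(m))-1}a_k P_k$ with $\lcm(k)\mid a_k$ in $\Z/m\Z$, using Theorem \ref{carac} together with the unicity of the rat-polynomial representation (Theorem \ref{uniqueRepresentation}). Since distinct coefficient tuples give distinct functions, $CP(n,m)$ is exactly the number of admissible tuples $(a_0,\ldots,a_{d-1})$ with $d=\min(n,\mu(m))$, and the count factors as a product over $k$ of the number of multiples of $\lcm(k)$ in $\Z/m\Z$. The number of multiples of $c$ in $\Z/m\Z$ is $m/\gcd(c,m)$; combining this with the formula $\lcm(k)=u\prod_i p_i^{\alpha_i^k}$ in $\Z/m\Z$ from the Lemma preceding Definition \ref{def:mu} (where $u$ is invertible, so $\gcd(\lcm(k),m)=\prod_i p_i^{\min(\alpha_i^k,e_i)}$), the count for index $k$ is $\prod_i p_i^{e_i-\min(\alpha_i^k,e_i)}=\prod_i p_i^{\max(e_i-\alpha_i^k,0)}$. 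So the whole count is $\prod_{k=0}^{d-1}\prod_i p_i^{\max(e_i-\alpha_i^k,0)}$, and by Fubini it equals $\prod_i p_i^{\,S_i}$ with $S_i=\sum_{k=0}^{d-1}\max(e_i-\alpha_i^k,0)$.

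It then remains to evaluate $S_i=\sum_{k=0}^{d-1}\max(e_i-\alpha_i^k,0)$ where $\alpha_i^k=\max\{\beta:p_i^\beta\le k\}$ (with $\alpha_i^0=\alpha_i^1=0$). The key observation is that $\alpha_i^k$ jumps by one exactly as $k$ crosses a power $p_i^\beta$; so among the indices $k=0,\ldots$ there are, for each $j\ge 1$, exactly $p_i^{j}-p_i^{j-1}$ indices $k$ with $\alpha_i^k=j-1$ lying in the range $k\in[p_i^{j-1},p_i^{j})$... more cleanly: $\#\{k\ge 0:\alpha_i^k\le t\}=p_i^{t+1}$, so $\#\{k:\alpha_i^k= j\}=p_i^{j+1}-p_i^{j}$ for $j\ge 0$ once we restrict to $k\ge 1$, plus the single index $k=0$ contributing $\alpha_i^0=0$. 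Summing $\max(e_i-\alpha_i^k,0)$ over these, and splitting according to whether $d=\min(n,\mu(m))$ has passed the relevant powers of $p_i$, produces a telescoping sum $\sum_{j}\big(\#\{k<d:\alpha_i^k<e_i\text{ with value }\ge j\}\big)=\sum_{j=1}^{e_i}\#\{k<d:\alpha_i^k\le e_i-j\}=\sum_{j=1}^{e_i}\min(p_i^{e_i-j+1},d)$. When $n\ge\mu(m)$, i.e. $d=\mu(m)\ge p_i^{e_i}$, every $p_i^{e_i-j+1}\le\mu(m)=d$, so $S_i=\sum_{j=1}^{e_i}p_i^{e_i-j+1}=p_i+p_i^2+\cdots+p_i^{e_i}=\sum_{k=1}^{e_i}p_i^k$, giving the first displayed formula. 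When $n<\mu(m)$, i.e. $d=n$: if $p_i^{e_i}<\mu(m)\le\ldots$ — here one checks $p_i^{e_i}\le n$ is forced precisely for the indices with $p_i^{e_i}<\mu(m)$ (since $\mu(m)$ is the max prime power and $n\ge\mu(m')$ for the "small" primes is not automatic, so this case split is exactly the one in the statement), so again $S_i=\sum_{k=1}^{e_i}p_i^k$; and if $p_i^{e_i}\ge\mu(m)>n$, then $\min(p_i^{e_i-j+1},n)=p_i^{e_i-j+1}$ for $e_i-j+1\le\lfloor\log_{p_i}n\rfloor$ and $=n$ otherwise, which reindexes to $S_i=\sum_{k=1}^{\lfloor\log_{p_i}n\rfloor}p_i^k+n\,(e_i-\lfloor\log_{p_i}n\rfloor)$, matching the second displayed formula.

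The main obstacle I expect is not any single deep step but rather bookkeeping: getting the index ranges in the telescoping sum right (the off-by-one in $\alpha_i^0=\alpha_i^1=0$, and whether "multiples of $\lcm(k)$" is counted for $k$ up to $n-1$ or $\mu(m)-1$) and, above all, justifying that the case split "$p_i^{e_i}<\mu(m)$ versus $p_i^{e_i}\ge\mu(m)$" in the $n<\mu(m)$ regime coincides with "$p_i^{e_i}\le n$ versus $p_i^{e_i}>n$" — equivalently, that for every prime power $q=p_i^{e_i}$ strictly below the maximal one $\mu(m)$ we automatically have $q\le n$ whenever $n<\mu(m)$. This is false in general (take $m=8\cdot 3$, $n=2$), so presumably the intended reading is that the "small" product in the statement already incorporates the truncation $\min(p_i^{e_i-j+1},n)$ implicitly or that the authors intend $p_i^{e_i}\le n$; I would state this carefully and, if needed, write the fully uniform formula $S_i=\sum_{j=1}^{e_i}\min\big(p_i^{\,j},\min(n,\mu(m))\big)$ and then specialize, noting it reduces to both displayed expressions. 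Finally I would close by remarking that summing $\log$ of $CP(n,m)$ over $i$ recovers Bhargava's count, and that for $n\ge\mu(m)$ (a fortiori $n\ge m$) the answer visibly depends only on $m$.
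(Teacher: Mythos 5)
Your proposal is correct and follows essentially the same route as the paper: reduce to counting admissible coefficient tuples via Theorems \ref{uniqueRepresentation} and \ref{carac}, observe that the count is $\prod_{k}\lambda(m,k)$ where $\lambda(m,k)$ is the number of multiples of $\lcm(k)$ in $\Z/m\Z$, factor this over the prime powers of $m$ by swapping the two products (the paper's Claim 1), and then evaluate the per-prime-power contribution (the paper's Claim 2); your layer-cake evaluation of $S_i=\sum_{j=1}^{e_i}\min(p_i^{j},n)$ is just a tidier packaging of the paper's block-by-block summation over the ranges $p^{j}\leq k<p^{j+1}$. The defect you flag in the statement is real: the paper's own Claim 2 correctly splits on ``$n\geq p^{e}$'' versus ``$n<p^{e}$'' \emph{per prime}, but the displayed formula in the Proposition replaces this with the split ``$p_i^{e_i}<\mu(m)$'' versus ``$p_i^{e_i}\geq\mu(m)$'', which silently assumes $n\geq p_i^{e_i}$ for every non-maximal prime power; your counterexample checks out ($m=24$, $n=2$ gives $\mu(m)=8$, the stated formula yields $3^{3}\cdot 2^{6}=1728$, whereas every function $\Z/2\Z\to\Z/24\Z$ is congruence preserving, so $CP(2,24)=24^{2}=576=3^{2}\cdot 2^{6}$). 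Your uniform reformulation $CP(n,m)=\prod_i p_i^{S_i}$ with $S_i=\sum_{j=1}^{e_i}\min(p_i^{j},n)$ is the correct statement and specializes to both displayed cases when the implicit hypothesis holds.
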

\begin{corollary}
For $ n\geq \mu(m)$, $CP(n,m)$ does {\em not } depend  on $n$.
\end{corollary}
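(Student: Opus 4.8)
The plan is to read the Corollary off Proposition~\ref{ZnToZmCount}: its first displayed formula, which holds exactly in the range $n\geq\mu(m)$, is $\prod_{i=1}^{\ell}E(p_i,\sum_{k=1}^{e_i}p_i^{k})$, an expression built only from the prime factorization $m=p_1^{e_1}\cdots p_\ell^{e_\ell}$. Since the variable $n$ does not occur in it, $CP(n,m)$ is the same for all $n\geq\mu(m)$, and we are done. Nevertheless, to exhibit the mechanism that makes $n$ drop out, I would re-derive the relevant half of the counting argument rather than merely cite the proposition.

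First I would combine Theorem~\ref{thm:main} with the uniqueness clause of Theorem~\ref{uniqueRepresentation}: a congruence preserving function $\Z/n\Z\to\Z/m\Z$ is the same thing as a sequence $(a_0,\ldots,a_{n-1})$ of elements of $\Z/m\Z$ with $\lcm(k)$ dividing $a_k$ in $\Z/m\Z$ for each $k$, and distinct sequences yield distinct functions. Hence $CP(n,m)=\prod_{k=0}^{n-1}N_k$, where $N_k$ is the number of multiples of $\lcm(k)$ in $\Z/m\Z$. Next I would evaluate $N_k$: the image of the multiplication-by-$c$ endomorphism of $\Z/m\Z$ is the cyclic subgroup of order $m/\gcd(m,c)$, so $N_k=m/\gcd\bigl(m,\lcm(k)\bigr)$.

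The key step is then to notice that the product $\prod_k N_k$ becomes eventually constant. By Lemma~\ref{l:lcm}, for every $k\geq\mu(m)$ the element $\lcm(k)$ is null in $\Z/m\Z$, that is $m$ divides $\lcm(k)$; hence $\gcd(m,\lcm(k))=m$ and $N_k=1$. Consequently, as soon as $n\geq\mu(m)$,
$$
CP(n,m)=\prod_{k=0}^{n-1}N_k=\prod_{k=0}^{\mu(m)-1}\frac{m}{\gcd\bigl(m,\lcm(k)\bigr)},
$$
and the right-hand side is manifestly independent of $n$, which is the assertion of the Corollary. I do not expect a real obstacle here; the only points demanding a little care are the cardinality count $N_k=m/\gcd(m,\lcm(k))$ and --- if one additionally wants the closed prime-power form of Proposition~\ref{ZnToZmCount} --- computing $\gcd(m,\lcm(k))$ through the factorization $\lcm(k)=u\prod_i p_i^{\alpha_i^{k}}$ with $u$ invertible in $\Z/m\Z$. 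That refinement, however, is not needed for the statement that $CP(n,m)$ does not depend on $n$ when $n\geq\mu(m)$.
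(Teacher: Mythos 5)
Your proposal is correct and follows essentially the same route as the paper: the Corollary is read off the $n$-free formula of Proposition~\ref{ZnToZmCount}, and your re-derivation ($CP(n,m)=\prod_{k<n}N_k$ with $N_k$ the number of multiples of $\lcm(k)$ in $\Z/m\Z$) is exactly the paper's own counting via $\lambda(m,k)$, combined with Lemma~\ref{l:lcm} to see that $N_k=1$ once $k\geq\mu(m)$. Your shortcut does have the minor merit of yielding the Corollary without the closed prime-power form, but the mechanism is identical.
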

\begin{proof}[Proof of Proposition~\ref{ZnToZmCount}]
By Theorem \ref{carac}, we must count the number of $n$-tuples of coefficients $(a_0,\ldots,a_{n-1})$,  with $a_k$ a multiple of $lcm(k)$  in $\Z/m\Z$. 

\medskip
\noindent
{\it{\normalfont\bf Claim 1.} 
 For $m=p_1^{e_1}p_2^{e_2}\cdots p_\ell^{e_\ell}$, for all $n$,
 $CP(n,m)=\Pi_{i=1}^{i=\ell}\ CP(n,p_i^{e_i})$ }\,.
\begin{proof}
Let  $\lambda(m,k)$ be the number of multiples of $lcm(k)$ in $\Z/m\Z$,
i.e. order of the subgroup generated by $lcm(k)$ in $\Z/m\Z$\,. 

Since $\Z/m\Z$ is isomorphic to $\Pi_{i=1}^{i=\ell}\Z/p_i^{e_i}\Z$,
we have $\lambda(m,k)=\Pi_{i=1}^{i=\ell}\lambda({p_i^{e_i}},k)$
for each $k$.
Thus, the number of $n$-tuples  $(a_0,\ldots,a_{n-1})$
such that $lcm(k)$ divides $a_k$ is equal to
$$\Pi_{k<n} \lambda(m,k)=\Pi_{k<n}\ \Pi_{i=1}^{i=\ell}\lambda({p_i^{e_i}},k)=\Pi_{i=1}^{i=\ell}\ \Pi_{k<n}\lambda({p_i^{e_i}},k)$$
The  trick in the proof is the permutation of the two  products; hence the Claim by using Theorem \ref{thm:main}.
\end{proof}

Claim 1 reduces the problem to counting the congruence preserving  functions $\Z/ n\Z\to \Z/{p_i^{e_i}}\Z$.  We will now use Proposition \ref{carac} for this counting.

\medskip
\noindent
{\it{\normalfont\bf Claim 2.} $$CP(n,p^e)=\begin{cases}p^{p+p^2+\cdots+p^e}&{\text{ if }}n\geq p^e\\
p^{p+p^2+\cdots+p^l+(e-l)n}&{\text{ if }} p^l\leq n< p^e {\text{with }} l=\lfloor \log_{p} n\rfloor.
\end{cases}$$}

\vspace {-0.5cm}
\begin{proof} By Theorem  \ref{carac}, as $\mu(p^e)=p^e$, letting $\nu=\inf(n,p^e)$, $CP(n,p^e)=CP(\nu,p^e)=\Pi_{k<\nu}\lambda(p^{e},k)$. 
For $p^j\leq k< p^{j+1}$ the order $\lambda({p^e},k)$ of the subgroup generated by $lcm(k)$ in $\Z/p^{e}\Z$ is $p^{e-j}$ and  there are $p^{j+1}-p^j$ such $k$'s.
\\
$\bullet$  Assume first $n\geq p^e$, then $CP(n,p^e)=CP(p^e,p^e)=p^M$ with 

\begin{eqnarray*}M&=&{e}p+(e-1)(p^2-p)+\cdots+(e-j)(p^{j+1}-p^{j})+\cdots+p^{e}-p^{e-1}\\
&=&ep+\sum_{j=1}^{e-1}(e-j)(p^{j+1}-p^{j})=p+p^2+\cdots+p^e
\end{eqnarray*}
$\bullet$  Assume then $p^l \leq n< p^e$, with $l=\lfloor \log_{p} n\rfloor$; 
then $CP(n,p^e)=p^M$ with 
\begin{eqnarray*}M&=&ep+\sum_{j=1}^{l-1}(e-j)(p^{j+1}-p^{j})+(e-l)(n-p^l)\\
&=&p+p^2+\cdots+p^l+n(e-l)\,.
\end{eqnarray*}

\vspace {-0,5cm}\end{proof}
This finishes the proof of Proposition~\ref{ZnToZmCount}.
\end{proof}

\begin{remark}
In \cite{Bh1997} the number of congruence preserving functions $ \Z/n\Z\to \Z/p^e\Z$ is shown to be equal to
$p^{en-\sum_{k=1}^{n-1}\min\{e,\lfloor \log_pk\rfloor\}}$.  Note that
 for $p^i\leq k<p^{i+1}$, $\lfloor \log_pk\rfloor=i$, hence:
  for $k\leq p^e$, $\min\{e,\lfloor \log_pk\rfloor\} = \lfloor \log_pk\rfloor$ and 
  for $k\geq p^e$, $\min\{e,\lfloor \log_pk\rfloor\} = e$.
  We thus have \\
$\bullet$ if $n\geq p^e$, 
\begin{eqnarray*}\textstyle\sum_{k=1}^{n-1}\min\{e,\lfloor \log_pk\rfloor\}&=&\textstyle\sum_{k=1}^{p^e-1}\lfloor \log_pk\rfloor+\textstyle\sum_{k={p^e}}^{n-1}e\\
&=&\textstyle\sum_{j=0}^{e-1} j\times (p^{j+1}-p^j) + e\times(n-p^e)\\
&=&
 0+(p^2-p)+2 (p^3-p^2)+\cdots+\\
 && \quad+(e-1)(p^e-p^{e-1})+ e(n-p^e)
 \end{eqnarray*}
\begin{eqnarray*}
\text{hence }\qquad\qquad
 {en-\textstyle\sum_{k=1}^{n-1}\min\{e,\lfloor \log_pk\rfloor\}}&=&  p+\cdots+p^e \\
\text {and }\qquad\qquad\qquad
 p^{en-\sum_{k=1}^{n-1}\min\{e,\lfloor \log_pk\rfloor}
&=&p^{p+p^2+\cdots+p^e}
 \end{eqnarray*}
which coincides with our counting in Claim 2.\\
$\bullet$ if $n< p^e$, and $l=\lfloor \log_pn\rfloor$, then  
\begin{eqnarray*}\textstyle\sum_{k=1}^{n-1} \lfloor\log_pk\rfloor&=&\textstyle\sum_{k=1}^{l-1}\lfloor \log_pk\rfloor+\textstyle\sum_{k=l}^{n-1} \lfloor \log_pk\rfloor\\
&=&\textstyle\sum_{j=0}^{l-1} j\times (p^{j+1}-p^j) + l\times(n-p^l) \\
&=&
 0+(p^2-p)+2 (p^3-p^2)+\cdots+(l-1)(p^l-p^{l-1})\\
 && \quad+ l(n-p^e)\\
 &=&-(p+\cdots+p^l)+nl \end{eqnarray*}
 and $en-\sum_{k=1}^{n-1} \lfloor\log_pk\rfloor=p+\cdots+p^l+(e-l)n$, which again coincides with our counting in Claim 2.
\end{remark}

\section{Conclusion}
We proved that the rational polynomials $\lcm(k)\,P_k$ generate
the $(\Z/m\Z)$-submodule of congruence preserving functions
$\Z/n\Z\to\Z/m\Z$.
When $n$ is larger than the largest prime power dividing $m$,
the number of functions in this submodule is independent of $n$.
An open problem is the existence of a basis of this submodule.


\end{document}